\documentclass{amsart}
\usepackage[margin=1in]{geometry}

\usepackage{color,amssymb, comment, mathrsfs,tikz,upgreek,contour, soul, colonequals, mathdots,fancyvrb}
\usepackage[colorlinks, linkcolor={blue}]{hyperref}
\usepackage{tikz-cd}
\usepackage{cleveref}
\usetikzlibrary{cd}
\usetikzlibrary{fit,shapes.geometric}
\usetikzlibrary{matrix}
\usetikzlibrary{arrows}
\usepackage{enumitem}
\usepackage{mathtools}
\usepackage{thmtools}
\usepackage{graphicx}
\usepackage[all]{xy}
\usepackage[mathlines,pagewise]{lineno}
\allowdisplaybreaks

\newtheorem{theorem}{Theorem}[section]
\newtheorem{lemma}[theorem]{Lemma}

\newtheorem{corollary}[theorem]{Corollary}
\theoremstyle{definition}
\newtheorem*{ack}{Acknowledgements}

\usepackage[only,llbracket,rrbracket,llparenthesis,rrparenthesis]{stmaryrd} 
\usepackage{accsupp}

\theoremstyle{definition}

\newtheorem*{maintheorem}{Main Results}
\newtheorem{deriveddepthlemma}[theorem]{The Derived Depth Lemma}

\theoremstyle{remark}

\newtheorem{chunk}[theorem]{}

\numberwithin{equation}{section}

\newcommand{\D}{\mathrm{D}}

\newcommand{\kk}{\Bbbk}

\newcommand{\xra}{\xrightarrow}
\newcommand{\id}{\operatorname{id}}

\newcommand{\Ext}{\operatorname{Ext}}

\newcommand{\Hom}{\operatorname{Hom}}

\newcommand{\RHom}{\operatorname{\mathbf{R}Hom}}

\newcommand{\Ass}{\operatorname{Ass}}

\newcommand{\Tor}{\operatorname{Tor}}

\newcommand{\m}{\mathfrak{m}}

\newcommand{\depth}{\operatorname{depth}}
\newcommand{\width}{\operatorname{width}}

\newcommand{\cidim}{\operatorname{CI-dim}}

\renewcommand{\mod}{\operatorname{mod}}

\newcommand{\ciuid}{\operatorname{CI^*-id}}

\newcommand{\fm}{\mathfrak{m}}
\newcommand{\Dmcpl}[1][R]{\Catsup{#1}{D}{$\fm$-com}}

\makeatletter
\DeclareFontEncoding{LS2}{}{\@noaccents}
\makeatother
\DeclareFontSubstitution{LS2}{stix}{m}{n}

\DeclareSymbolFont{largesymbolsstix}{LS2}{stixex}{m}{n}

\DeclareMathDelimiter{\lbrbrak}{\mathopen}{largesymbolsstix}{"EE}{largesymbolsstix}{"14}
\DeclareMathDelimiter{\rbrbrak}{\mathclose}{largesymbolsstix}{"EF}{largesymbolsstix}{"15}

\crefname{diagram}{diagram}{diagrams}
\crefname{diagram}{Diagram}{Diagrams}
\creflabelformat{diagram}{(#1) #2 #3}

\newcommand{\hsup}{\operatorname{hsup}}
\newcommand{\p}{\mathfrak{p}}
\newcommand{\Spec}{\operatorname{Spec}}
\newcommand{\pd}{\operatorname{pd}}
\newcommand{\hinf}{\operatorname{hinf}}
\newcommand{\qid}{\operatorname{qid}}
\newcommand{\fd}{\operatorname{fd}}

\newcommand{\qpd}[1]{\operatorname{qpd}_{#1}}
\renewcommand{\H}{\mathrm{H}}

\newcommand{\lotimes}{\otimes^{\mathbf{L}}}
\newcommand{\Supp}{\operatorname{Supp}}
\newcommand{\catb}{\sqsubset\mspace{-13mu}\sqsupset}
\newcommand{\catbb}{\sqsupset}
\newcommand{\Catsub}[3]{{\mathsf{#2}}_{#3}(#1)}
\newcommand{\Catsupsub}[4]{{\mathsf{#2}}^{\text{\upshape #3}}_{#4}(#1)}
\newcommand{\Catsup}[3]{{\mathsf{#2}}^{\text{\upshape #3}}(#1)}
\newcommand{\Db}[1][R]{\Catsub{#1}{D}{\catb}}
\newcommand{\Dfb}[1][R]{\Catsupsub{#1}{D}{f}{\catb}}
\newcommand{\Dbb}[1][R]{\Catsub{#1}{D}{\catbb}}
\newcommand{\Df}[1][R]{\Catsup{#1}{D}{f}}
\newcommand{\coker}{\operatorname{Coker}}

\newcommand{\ciid}{\operatorname{CI-id}}
\renewcommand{\Im}{\operatorname{Im}}
\newcommand{\LLam}[2][\mathfrak{a}]{\nobreak{\mathbf{L}\Lambda^{#1}#2}}
\newcommand{\cifd}{\operatorname{CI-fd}}
\newcommand{\Art}{\operatorname{Art}}

\newcommand{\n}{\mathfrak{n}}

\keywords{Homological dimensions, Ischebeck's formula, derived depth formula, derived width formula, dependency formula, quasi-projective dimension, quasi-injective dimension, quasi-projective resolution, quasi-injective resolution, complete intersection dimension}

\subjclass[2020]{13D02, 13D05, 13D09.}

\author[Ferraro]{Luigi Ferraro}
\address[Luigi Ferraro]{School of Mathematical and Statistical Sciences \\ University of Texas Rio Grande Valley \\ Edinburg, TX 78539, U.S.A}
\email{luigi.ferraro@utrgv.edu}
\urladdr{https://faculty.utrgv.edu/luigi.ferraro}

\author[Lyle]{Justin Lyle}
\address[Justin Lyle]{Department of Mathematics and Statistics \\ 221 Parker Hall\\
Auburn University\\
Auburn, AL 36849}
\email{jll0107@auburn.edu}
\urladdr{https://jlyle42.github.io/justinlyle/}

\begin{document}
\title{The derived depth formula for modules of finite quasi-projective dimension}
\begin{abstract}
Let $(R,\m,\kk)$ be a commutative Noetherian local ring. We prove a variety of new formulae for modules of finite quasi-projective or finite quasi-injective dimension. These include the Derived Depth Formula, itself an extension of Auslander's famous depth formula, a variation of the Derived Depth Formula for width, an extended version of Ischebeck's Formula, and a Dependency formula in the vein of Jorgensen. Several special cases of our main results are new even under stronger assumptions on the vanishing of various complete intersection dimensions.
\end{abstract}

\maketitle

\section{Introduction}\label{intro}
Let $(R,\m,\kk)$ be a commutative Noetherian local ring. The notion of quasi-projective dimension was introduced by Gheibi-Jorgensen-Takahashi in \cite{qpd} and has received intense study in the ensuing years (see \cite{BG22,qpdAb,cqpd,qpdIschebeck} for a few examples). We direct the reader to Section \ref{qpdsubsection} for the precise definition. One should take as a philosophy that a (finitely-generated) module of finite quasi-projective dimension behaves like a (finitely-generated) module over a complete intersection ring, but is orthogonal in a sense to other notions that behave analogously. For instance, invariants such as the complete intersection dimension or complexity are finite for the residue field $\kk$ if and only if $R$ is already a complete intersection (see \cite[Theorem 1.3]{AG97} and \cite[Theorem]{Gu71}, respectively), while on the other hand $\qpd{R}\kk<\infty$ for any local ring $R$ \cite[Proposition 3.6]{qpd}. However, that finiteness of $\qpd{R}M$ for every $M$ characterizes the complete intersection condition remains an open question (\cite[Question 3.9]{qpd}) in full generality, and one of some significance due to its connection to the well-studied proxy-small condition believed itself to characterize complete intersections (see \cite{BG22}). In fact, the work of \cite{BG22} uses the notion of quasi-projective dimension to establish this characterization in the case where $R$ is equipresented, or when $R$ has large cohomological support in the sense of \cite{Po21} (cf. \cite{AB00}).  

 Given the connection modules of finite quasi-projective dimension have to complete intersections, and given that \cite[Question 3.9]{qpd} remains unresolved in full generality, is natural to explore to what extent modules of finite quasi-projective dimension inherit the expected behaviors of modules over a complete intersection ring. In this work we focus primarily on three such behaviors, formulae that are known to hold, with appropriate hypotheses, over complete intersection rings. The first two of which are the so-called \emph{derived depth formula}, established for complete intersections by Christensen-Jorgensen \cite[Theorem 4.6 and Proposition 6.5]{deriveddepthwidth}, and the \emph{derived width formula} which can be readily seen to hold over complete intersections from \cite[Propositions 6.4-6.5]{deriveddepthwidth}.
 The last of which is the \emph{dependency formula}, established for modules of finite complete intersection dimension by Jorgensen \cite[Theorem 2.2]{Jorgensen}. To be precise, we present the following as our main results. In stating these results, we direct the reader to Section \ref{prelimsection} for the definitions of depth and width of complexes:

\begin{maintheorem}\label{intromaintheorem}
Suppose $M$ is a finitely generated $R$-module of finite quasi-projective dimension, and suppose $N$ is an $R$-complex with bounded homology. Then we have the following:
\begin{enumerate}
\item[$(1)$] (The Derived Depth Formula)
If $M \lotimes_R N$ has bounded homology, then
\[\depth M \lotimes_R N=\depth M+\depth N-\depth R.\]

\item[$(2)$] (The Derived Width Formula)
If $\RHom_R(M,N)$ has bounded homology, then 
\[\width \RHom_R(M,N)=\depth M+\width N-\depth R\]

\item[$(3)$] (The Dependency Formula) If $N$ and $M\lotimes_RN$ have finitely generated total homology, then
\[
\hsup(M\lotimes_RN)=\sup\{\depth R_\p-\depth_{R_\p}M_\p-\depth_{R_\p}N_\p\mid \p\in\Supp M \cap\Supp N\}.
\]

\end{enumerate}
\end{maintheorem}

(1) appears below as \Cref{thm:ddFormula}, (2) is special case of \Cref{thm:dwFormulaQPD}, and (3) appears as \Cref{thm:dependency}.

The original work of Gheibi-Jorgesen-Takahashi had previously established the ordinary depth formula, that is they establish (1) when we additionally have that $N$ is a finitely generated $R$-module satisfying $\Tor^R_{i>0}(M,N)=0$ (\cite[Theorem 4.11]{qpd}). \cite[Corollary 3.9]{JV24} relaxes this assumption on $\Tor^R_i(M,N)$ to only require $\Tor^R_{i \gg 0}(M,N)$, provided $\Tor^R_q(M,N)$ has depth at most $1$, when $q:=\sup \{i \mid \Tor^R_i(M,N) \ne 0\}$ is positive.

In addition to extending these results to derived variations, a key advantage of our approach is to relax requirements on finite generation of homology for the complex $N$, which in particular requires a bit of subtlety in managing conditions on depth, as one loses access to much of the standard theory from the finitely generated case. Relaxing finite generation requirements has the byproduct of making statements more amenable to duality, a fact we leverage to establish the derived width formula under more flexible hypotheses, some of which are new even under analogous assumptions of finite complete intersections dimensions; see \Cref{thm:dwFormulaQPD}. As a consequence, we also obtain new cases where one has the so-called Ischebeck Formula (see \cite[2.6]{ischebeck}) in \Cref{ischebeck}, directly extending \cite[Theorem 4.3]{qpdIschebeck}.

We now outline the structure of this work. In \Cref{prelimsection} we set notation and provide background needed for the body of our work. In \Cref{deriveddepthsection} we prove the Derived Depth Formula (\Cref{thm:ddFormula}) and the dependency formula (\Cref{thm:dependency}). \Cref{derivedwidthsection} proves the Derived Width Formula under a variety of hypotheses (\Cref{thm:dwFormulaQPD}) and obtains the Ischebeck Formula (\Cref{ischebeck}) as a corollary. 

\begin{ack}
The authors thank Souvik Dey and Mohsen Gheibi for helpful discussions. We also thank an anonymous referee whose careful reading and feedback improved the quality of the paper. Luigi Ferraro was partly supported by the Simons Foundation grant MPS-TSM-00007849. 
\end{ack}

\section{Preliminaries}\label{prelimsection}

Throughout, we let $(R,\m,\kk)$ be a commutative Noetherian local ring. We let $\mod(R)$ denote the category of finitely generated $R$-modules, and we let $\Art(R)$ denote the category of Artinian $R$-modules. If $M$ is an $R$-complex, we write $\sup M:=\sup \{i \mid M_i \ne 0\}$ and $\inf M:=\inf \{i \mid M_i \ne 0\}$. We also write $\hsup M:=\sup \{i \mid H_i(M) \ne 0\}$ and $\hinf M:=\inf \{i \mid H_i(M) \ne 0\}$.

We write $\D(R)$ for the derived category of $R$, $\Dbb$ for the full triangulated subcategory of $\D(R)$ whose objects have finite $\hinf$, and $\Db$ for the full triangulated subcategory of $\D(R)$ whose objects have bounded homology. We let $\Df$ denote the full triangulated subcategory of $\D(R)$ consisting of those complexes $C$ for which $H_i(C)$ is finitely generated for all $i$, and we let $\Dfb:=\Df \cap \Db$. We let $\LLam[\m]{(-)}$ denote the derived $\m$-completion functor, and we let $\Dmcpl$ denote the full subcategory of $\D(R)$ whose objects are derived $\m$-complete.

\begin{chunk}\label{defnofcbh}
If $M= \cdots \rightarrow M_{i+1} \xrightarrow{\partial^M_{i+1}} M_i \xrightarrow{\partial^M_i} M_{i-1} \rightarrow \cdots$ is a complex, we set 
\[
C_i(M)=\coker(F_{i+1}\xra{\partial_{i+1}} F_i),\quad Z_i(M)=\ker(F_i\xra{\partial_i} F_{i-1}), \quad B_i(M)=\Im(F_{i+1}\xra{\partial_{i+1}} F_i).
\]
for every $i$.

\end{chunk}

If $M,N \in \D(R)$, we set $\Ext^i_R(M,N):=H_{-i}(\RHom_R(M,N))$ and $\Tor^R_i(M,N):=H_i(M \lotimes_R N)$. 

The following definitions offer the main objects of study in this work:
\subsection*{Quasi-Projective Dimension}
A \emph{quasi-projective resolution} $F$ of a (nonzero) $R$-module $M$ is a complex of $R$-modules with the following properties:
\begin{enumerate}
\item $F_i=0$ for $i \ll 0$.
\item $F_i$ is a projective $R$-module for all $i$.
\item For all $i$, there is an $a_i \in \mathbb{N}$ for which $H_i(F) \cong M^{\oplus a_i}$, with $a_k \ne 0$ for some $k$.
\end{enumerate}
The \emph{quasi-projective dimension} $\qpd{R}M$ of $M$ is the infimum of integers $q$ for which there exists a quasi-projective resolution $F$ of $M$ with $\sup F<\infty$ and $\sup F-\hsup F=q$. The quasi-projective dimension of the zero module is set to be $-\infty$.

\subsection*{Quasi-Injective Dimension}\label{qpdsubsection}
A \emph{quasi-injective resolution} $I$ of a (nonzero) $R$-module $M$ is a complex of $R$-modules with the following properties:
\begin{enumerate}
\item $I_i=0$ for $i \gg 0$.
\item $I_i$ is an injective $R$-module for all $i$.
\item For all $i$, there is an $a_i \in \mathbb{N}$ for which $H_i(I) \cong M^{\oplus a_i}$, with $a_k \ne 0$ for some $k$.
\end{enumerate}
The \emph{quasi-injective dimension} $\qid_{R}M$ of $M$ is the infimum of integers $q$ for which there exists a quasi-injective resolution $I$ of $M$ with $\inf I>-\infty$ and $\hinf I-\inf I=q$. The quasi-injective dimension of the zero module is set to be $-\infty$.

\subsection*{Depth and Width of Complexes}
We recall the following invariants:
If $M \in \D(R)$, then the \emph{depth} of $M$ is defined as the value $\depth M:=-\hsup \RHom_R(\kk,M)$, while the \emph{width} of $M$ is defined as $\width M:=\hsup \kk \lotimes_R M$. There are many ways one may calculate $\depth$ or $\width$; see \cite[Theorems 14.4.8 and 16.2.14]{LarsBook}.

\subsection*{Complete Intersection Dimensions}
Finally, we recall the notions of various complete intersection dimensions:
A quasi-deformation of $R$ is a diagram $R \xrightarrow{\alpha} R' \xleftarrow{p} Q$
where $\alpha$ is flat and the defining ideal of the surjection $p$ is a complete intersection. We say the quasi-deformation is \emph{exceptional} if $R'$ has Gorenstein formal fibres (e.g. if $R'$ admits a dualizing complex) and if the closed fiber $R'/\m R'$ is Gorenstein.

For a nonzero $R$-complex $M$, we have the following invariants:
\begin{enumerate}
\item [$(1)$] $\cidim{M}:=\inf\{\pd_Q(M \otimes_R R')-\pd_Q(R') \mid R \xrightarrow{\alpha} R' \xleftarrow{p} Q \text{ is a quasi-deformation}\}$
\item[$(2)$] $\ciid{M}:=\inf\{\id_Q(M \otimes_R R')-\pd_Q(R')  \mid R \xrightarrow{\alpha} R' \xleftarrow{p} Q \text{ is a quasi-deformation}\}$

\item[$(3)$] $\ciuid{M}:=\inf\{\id_Q(M \otimes_R R')-\pd_Q(R')  \mid R \xrightarrow{\alpha} R' \xleftarrow{p} Q \text{ is an exceptional quasi-deformation}\}$

\item[$(4)$] $\cifd{M}:=\inf\{\fd_Q(M \otimes_R R')-\pd_Q(R')  \mid R \xrightarrow{\alpha} R' \xleftarrow{p} Q \text{ is a quasi-deformation}\}$.
\end{enumerate}
Where $\ciuid{M}$ is the \emph{upper complete intersection injective dimension} of $M$ defined in \cite[Definition 2.6]{ciInj}. We set $\cidim{M}=\ciid{M}=\ciuid{M}=\cifd{M}=0$ when $M$ is $0$.

\section{Derived Depth Formula}\label{deriveddepthsection}

The focus of this section is to prove \Cref{thm:ddFormula} and to obtain \Cref{thm:dependency} as a consequence.

We begin by recalling the following well-known result, which is a derived variation of the classical depth lemma; see \cite[Proposition 14.3.20]{LarsBook}.

\begin{deriveddepthlemma}
\label{depthlemma}
 Suppose 
\[X \to Y \to Z \to\]
is an exact triangle in $\D(R)$. Then we have the following:
\begin{enumerate}
\item $\depth Y \ge \inf\{\depth X,\depth Z\}$
\item $\depth Z \ge \inf\{\depth X-1,\depth Y\}$
\item $\depth X \ge \inf\{\depth Y,\depth Z+1\}.$
    
\end{enumerate}
\end{deriveddepthlemma}

We make use of the following lemma which allows us to control the behavior of depth of derived tensor products when passing to cokernels, cycles, and boundaries of a complex $X$:

\begin{lemma}\label{depthintriangles}
Let $X\in\Dbb[R], N\in\D(R)$ and $t\in\mathbb{Z}$. If $\depth X_i \lotimes_R N \ge t$ and $\depth \H_i(X) \lotimes_R N \ge t$ for all $i$, then we have the following for all $i$: 
\begin{enumerate}
\item[$(1)$] $\depth  C_i(X) \lotimes_R N \ge t$.
\item[$(2)$] $\depth Z_i(X) \lotimes_R N  \ge t$.
\item[$(3)$] $\depth B_i(X) \lotimes_R N  \ge t$.
\end{enumerate}
\end{lemma}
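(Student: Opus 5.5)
The plan is to work with the short exact sequences of modules built from the pieces of $X$, turn each into an exact triangle in $\D(R)$ by applying $-\lotimes_R N$, and then run an induction on $i$ from below using the Derived Depth Lemma~\ref{depthlemma}.

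\textbf{Setup and an interpretation issue.} Because the hypotheses mention the modules $X_i$ themselves, I read $X$ as an actual complex with $X_i=0$ for $i\ll 0$. This is how I understand $X\in\Dbb$ in this statement. In that case all of $C_i(X)$, $Z_i(X)$ and $B_i(X)$ vanish for $i\ll 0$, so their derived tensor products with $N$ have depth $\infty$. This gives the base of the induction. If the intended reading were only that $\hinf X$ is finite, I would first have to replace $X$ by a complex that is zero in low degrees. I expect this interpretive point to be the only real subtlety. Once it is settled, the rest is formal.

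\textbf{The triangles.} For each $i$ there are four short exact sequences of $R$-modules:
\[0\to Z_i(X)\to X_i\to B_{i-1}(X)\to 0,\qquad 0\to B_i(X)\to Z_i(X)\to \H_i(X)\to 0,\]
\[0\to \H_i(X)\to C_i(X)\to B_{i-1}(X)\to 0 .\]
The last sequence comes from the fact that $\partial_i$ induces $C_i(X)\twoheadrightarrow B_{i-1}(X)$ with kernel $Z_i(X)/B_i(X)=\H_i(X)$. Each sequence gives an exact triangle in $\D(R)$. Since $-\lotimes_R N$ is a triangulated functor, tensoring with $N$ again gives exact triangles.

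\textbf{The induction.} I will prove by induction on $i$ that $\depth B_i(X)\lotimes_R N\ge t$ and $\depth Z_i(X)\lotimes_R N\ge t$. Assume $\depth B_{i-1}(X)\lotimes_R N\ge t$.

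\begin{enumerate}
\item Apply part (3) of Lemma~\ref{depthlemma} to the triangle $Z_i\lotimes N\to X_i\lotimes N\to B_{i-1}\lotimes N\to$. This gives
\[\depth Z_i(X)\lotimes_R N\ \ge\ \inf\{\depth X_i\lotimes_R N,\ \depth B_{i-1}(X)\lotimes_R N+1\}\ \ge\ t .\]
\item Apply part (3) again, now to $B_i\lotimes N\to Z_i\lotimes N\to \H_i\lotimes N\to$. Using step (1) and the hypothesis on $\H_i(X)\lotimes_R N$, this gives $\depth B_i(X)\lotimes_R N\ge t$. This completes the inductive step and proves (2) and (3).
\item For (1), apply part (1) of Lemma~\ref{depthlemma} to $\H_i\lotimes N\to C_i\lotimes N\to B_{i-1}\lotimes N\to$. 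This gives
\[\depth C_i(X)\lotimes_R N\ge\inf\{\depth \H_i(X)\lotimes_R N,\ \depth B_{i-1}(X)\lotimes_R N\}\ge t,\]
using the hypothesis on $\H_i(X)\lotimes_R N$ and (3).
\end{enumerate}
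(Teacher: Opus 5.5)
Your proof is correct and takes essentially the same route as the paper. You induct upward on $i$ from the bottom of the bounded-below complex $X$, and at each step you apply the Derived Depth Lemma to triangles obtained by tensoring the standard short exact sequences with $N$; your reading of $X$ as an honest complex with $X_i=0$ for $i\ll 0$ matches the paper's ``without loss of generality $\inf X=0$''. The only differences are cosmetic: you start from the vacuous case $i<\inf X$ (where everything vanishes and has depth $\infty$) instead of handling $i=0$ explicitly, and you obtain $B_i(X)$ from $0\to B_i(X)\to Z_i(X)\to \H_i(X)\to 0$ rather than the paper's $0\to B_i(X)\to X_i\to C_i(X)\to 0$.
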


\begin{proof}
Without loss of generality, we may suppose that $\inf X=0$. For every $i$, we have short exact sequences:
\begin{equation}\label{sesdeptheq:1}
0\rightarrow H_i(X)\rightarrow C_i(X)\rightarrow B_{i-1}(X)\rightarrow0
\end{equation}
\begin{equation}\label{sesdeptheq:2}
0\rightarrow Z_i(X)\rightarrow X_i\rightarrow B_{i-1}(X)\rightarrow0
\end{equation}
\begin{equation}\label{sesdeptheq:3}
0\rightarrow B_i(X)\rightarrow X_i\rightarrow C_i(X)\rightarrow0
\end{equation}
which induce exact triangles:
\begin{equation}\label{deptheq:1}
H_i(X) \lotimes_R N\rightarrow C_i(X) \lotimes_R N\rightarrow B_{i-1}(X) \lotimes_R N\rightarrow
\end{equation}
\begin{equation}\label{deptheq:2}
Z_i(X) \lotimes_R N\rightarrow X_i \lotimes_R N\rightarrow B_{i-1}(X) \lotimes_R N\rightarrow
\end{equation}
\begin{equation}\label{deptheq:3}
B_i(X) \lotimes_R N\rightarrow X_i \lotimes_R N\rightarrow C_i(X) \lotimes_R N\rightarrow 
\end{equation}

We now proceed by induction on $i$. As $\inf X=0$, we have $C_0(X) \lotimes_R N=H_0(X) \lotimes_R N$, therefore $\depth C_0(X)\lotimes_R N \ge t$ by assumption. Similarly, we have $Z_0(X)=X_0$, so 
\[
\depth Z_0(X) \lotimes_R N=\depth X_0 \lotimes_R N \ge t.
\]
Then applying \Cref{depthlemma} to \eqref{deptheq:3}, we see that \[
\depth B_0(X) \lotimes_R N \ge \min\{\depth X_0\lotimes_R N,\depth C_0(X)\lotimes_RN+1\} \ge t,
\]
and we have established the base case.

Now suppose the claim holds for some $i-1$. Applying \Cref{depthlemma} and the inductive hypothesis to \eqref{deptheq:1}, we have $\depth C_i(X)\lotimes_R N \ge \min\{\depth H_i(X)\lotimes_R N,\depth B_{i-1}(X)\lotimes_R N\} \ge t$. Then similarly applying \ref{depthlemma} to \eqref{deptheq:2}, we see that \[
\depth Z_i(X)\lotimes_R N \ge \min\{\depth X_i\lotimes_R N,\depth B_{i-1}(X)\lotimes_R N+1)\} \ge \min\{t,t+1\}=t,
\]
while an application of \ref{depthlemma} to \eqref{deptheq:3} yields that 
\[
\depth B_i(X)\lotimes_R N \ge \min\{\depth X_i\lotimes_R N,\depth C_i(X)\lotimes_R N+1\} \ge \min\{t,t+1\}=t,
\]
and the claim follows by induction.
\end{proof}

The next lemma will be critical in the proof of the main theorem of this section, and extends the familiar fact that, if $(R,\m,\kk)$ is local and $\partial_h:F_h \to F_{h-1}$ is a map of finitely generated free $R$-modules with $\Im(\partial_h) \subseteq \m F_{h-1}$, then $\Hom_R(\kk,\partial_h \otimes N)=0$ for any finitely generated $R$-module $N$.

\begin{lemma}\label{extmaplemma}
Let $A$ be a commutative ring (not necessarily Noetherian), let $M$ be an $A$-module (not necessarily finitely generated), let $N \in \Db[A]$, and let $I$ be a finitely generated ideal of $A$. Suppose $\partial_h:F_h \to F_{h-1}$ is a map of (not necessarily finitely generated) projective $A$-modules with $\Im(\partial_h) \subseteq IF_{h-1}$. Then for all $i$, we have $\Im(\Ext^i_A(M,\partial_h \otimes N)) \subseteq I\Ext^i_A(M,F_{h-1} \otimes_A N)$.
\end{lemma}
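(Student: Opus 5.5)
The idea is to factor $\partial_h$ through a map built from generators of $I$, and then use additivity of $\Ext^i_A(M,-\otimes_A N)$. Write $I=(x_1,\dots,x_n)$.

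\textbf{Step 1: factor $\partial_h$.} Since $\Im(\partial_h)\subseteq IF_{h-1}$, the map $\mu\colon F_{h-1}^{\oplus n}\to F_{h-1}$ given by $(y_1,\dots,y_n)\mapsto \sum_j x_jy_j$ has image exactly $IF_{h-1}$. Because $F_h$ is projective, $\partial_h$ lifts along the surjection $\mu\colon F_{h-1}^{\oplus n}\to IF_{h-1}$. This gives a map $\phi=(\phi_1,\dots,\phi_n)\colon F_h\to F_{h-1}^{\oplus n}$ with
\[
\partial_h=\mu\circ\phi=\sum_{j=1}^n x_j\phi_j ,
\]
where each $\phi_j\colon F_h\to F_{h-1}$ is $A$-linear. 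This is an honest identity of module maps. No finite generation of $F_h$ or $F_{h-1}$ is needed, only projectivity of $F_h$ and finite generation of $I$.

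\textbf{Step 2: apply the functor.} Tensoring with $N$ (as a map of complexes, or in $\D(A)$; projectivity of $F_h$ and $F_{h-1}$ makes $-\otimes_A N$ agree with the derived tensor) and then applying $\Ext^i_A(M,-)$ gives additive functors. Hence
\[
\Ext^i_A(M,\partial_h\otimes N)=\sum_j \Ext^i_A(M,(x_j\phi_j)\otimes N).
\]
For each $j$, the map $(x_j\phi_j)\otimes N$ equals $(x_j\cdot \id_{F_{h-1}\otimes N})\circ(\phi_j\otimes N)$. Multiplication by $x_j$ on an object of $\D(A)$ induces, under $\Ext^i_A(M,-)$, multiplication by $x_j$ on $\Ext^i_A(M,F_{h-1}\otimes_A N)$. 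This holds because $A$ is commutative and $\RHom$ is $A$-linear, so the homothety is natural. Therefore each summand has image in $x_j\Ext^i_A(M,F_{h-1}\otimes_A N)$, and the sum has image in $I\Ext^i_A(M,F_{h-1}\otimes_A N)$.

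\textbf{Main obstacle.} The delicate point is justifying that the functor $\Ext^i_A(M,-\otimes N)$ sends the homothety $x_j$ to the homothety $x_j$. One must also make sure that working in $\D(A)$ with an unbounded-flat setting causes no trouble. I would handle this by computing $\RHom_A(M,-)$ via a projective resolution $P\to M$, which exists for any module. Then $\Ext^i_A(M,Y)=H_{-i}\Hom_A(P,Y)$ for a complex $Y$, and both additivity and $A$-linearity are visible at the chain level. Boundedness of $N$ ensures that $\Hom_A(P,-)$ computes $\RHom$ correctly even without K-injectivity concerns: $P$ is bounded below projective, hence K-projective, so any complex $Y$ works. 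The rest is formal.
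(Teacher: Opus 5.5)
Your proof is correct and follows the paper's own argument: both lift $F_h\to IF_{h-1}$ through the surjection $F_{h-1}^{\oplus n}\to IF_{h-1}$ determined by generators of $I$, then use that $\Ext^i_A(M,-\otimes_A N)$ is additive and sends homotheties to homotheties. Writing $\partial_h=\sum_j x_j\phi_j$ directly is slightly cleaner, because you never apply the functor to the possibly non-flat module $IF_{h-1}$. Also, as your K-projectivity remark already shows, the boundedness of $N$ is not actually needed for that step.
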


\begin{proof}
We first factor $\partial_h$ into the maps $F_{h} \xrightarrow{a} IF_{h-1} \xrightarrow{b} F_{h-1}$ where $a$ is the restriction and $b$ is the natural inclusion. Let $x_1,\dots,x_n$ be a generating set for $I$. Then there is a surjection $q:G:=F_{h-1}^{\oplus n} \twoheadrightarrow IF_{h-1}$ given by $\begin{pmatrix} y_1 \\ \vdots \\ y_n\end{pmatrix} \mapsto \sum^n_{i=1} x_iy_i$. Since $F_h$ is projective, we may lift to a commutative diagram 

\[\begin{tikzcd}
	{F_h} & \\
	G & {IF_{h-1}}
	\arrow["{q'}"', dashed, from=1-1, to=2-1]
	\arrow["{a}", from=1-1, to=2-2]
	\arrow["q"', from=2-1, to=2-2]
\end{tikzcd}\]
Since the functor $\Ext^i_A(M,- \otimes_A N)$ preserves multiplication maps, it follows that the composition
\vspace{1mm}

\[\begin{tikzcd}[cramped,column sep=2.3em, every node/.append style={font=\small}]
	{\Ext^i_A(M,F_{h-1} \otimes_R N)^{\oplus n}} & {\Ext^i_A(M,G \otimes_A N)} & {\Ext^i_A(M,IF_{h-1} \otimes_A N)} & {\Ext^i_A(M,F_{h-1} \otimes_A N)}
	\arrow["{\cong}" {yshift=3pt}, from=1-1, to=1-2]
	\arrow["{\Ext^h_A(M,q \otimes N)}" {yshift=3pt}, from=1-2, to=1-3]
	\arrow["{\Ext^i_A(M,b \otimes N)}" {yshift=3pt}, from=1-3, to=1-4]
\end{tikzcd}\]

is given by $\begin{pmatrix} w_1 \\ \vdots \\ w_n\end{pmatrix} \mapsto \sum_{i=1}^n x_iw_i \in I\Ext^i_A(M,F_{h-1} \otimes_A N)$. On the other hand, the composition
\vspace{1mm}
\[\begin{tikzcd}[cramped,column sep=3em, every node/.append style={font=\small}]
	{\Ext^i_A(M,F_{h} \otimes_A N)} & {\Ext^i_A(M,G \otimes_A N) } & {\Ext^i_A(M,IF_{h-1} \otimes_A N)} & {\Ext^i_A(M,F_{h-1} \otimes_A N)}
	\arrow["{\Ext^i_A(M,q' \otimes_A N)}" {yshift=3pt}, from=1-1, to=1-2]
	\arrow["{\Ext^h_A(M,q \otimes N)}" {yshift=3pt}, from=1-2, to=1-3]
	\arrow["{\Ext^i_A(M,b \otimes N)}" {yshift=3pt}, from=1-3, to=1-4]
\end{tikzcd}\]

is $\Ext^i_A(M,\partial_h \otimes_A N)$. Combined with the above, it follows that 
\[\Im(\Ext^i_A(M,\partial_h \otimes N)) \subseteq I\Ext^i_A(M,F_{h-1} \otimes_A N),\] as claimed.
\end{proof}

The following immediate Corollary of \Cref{extmaplemma} covers the primary case of interest for our proof of the Derived Depth Formula:

\begin{corollary}\label{extmapzero}
Suppose $(R,\m,\kk)$ is a local ring and let $N \in \Db[R]$. If $\partial_h:F_h \to F_{h-1}$ is a map of free $R$-modules with $\Im(\partial_h) \subseteq \m F_{h-1}$, then $\Ext^i_R(\kk,\partial_h \otimes_R N)$ is the zero map for all $i$.
\end{corollary}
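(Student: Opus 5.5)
We apply \Cref{extmaplemma} with $A=R$, $M=\kk$ and $I=\m$. Since $R$ is Noetherian, $\m$ is finitely generated, and free modules are projective, so the hypotheses of \Cref{extmaplemma} are met. It follows that for every $i$,
\[
\Im\bigl(\Ext^i_R(\kk,\partial_h\otimes_R N)\bigr)\subseteq \m\,\Ext^i_R(\kk,F_{h-1}\otimes_R N).
\]

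It remains to show that $\m$ annihilates $\Ext^i_R(\kk,X)$ for any $X\in\D(R)$, here with $X=F_{h-1}\otimes_R N$. The plan is to use that, for $x\in\m$, multiplication by $x$ on $\RHom_R(\kk,X)$ can be computed through either argument. Computed through the first argument, it is induced by the map $x\colon\kk\to\kk$, which is zero. By the $R$-bilinearity of $\RHom_R(-,-)$, the action of $x$ on $\Ext^i_R(\kk,X)$ therefore factors through $\Ext^i_R(x\cdot\id_\kk,X)=0$.

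Concretely, one may take a projective resolution $P\to\kk$. Then $x\cdot\id_P$ lifts $x\cdot\id_\kk=0$, so it is null-homotopic. Hence $x$ acts as zero on $H\bigl(\Hom_R(P,X')\bigr)$ for any complex $X'$, and in particular for a K-injective replacement of $X$. The same conclusion holds for $\Ext$ computed via $F_{h-1}\otimes_R N$ directly, since $F_{h-1}$ is flat and so $F_{h-1}\otimes_R N$ represents the derived tensor product.

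Therefore $\m\Ext^i_R(\kk,F_{h-1}\otimes_R N)=0$, and the image of $\Ext^i_R(\kk,\partial_h\otimes_R N)$ is zero, which is the claim. There is no real obstacle. The only point needing care is that the module structure on $\Ext$ used in \Cref{extmaplemma}, which comes from multiplication on the second argument $F_{h-1}\otimes_R N$, coincides with the one coming from the first argument. This is the standard fact that $R$ is central in $\D(R)$: the two actions of $x$ on $\Hom_R(P,X')$, by pre- and post-composition, agree on the nose.
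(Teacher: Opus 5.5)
Your proof is correct and follows the paper's own argument: apply \Cref{extmaplemma} with $A=R$, $M=\kk$, $I=\m$, and then use $\m\,\Ext^i_R(\kk,F_{h-1}\otimes_R N)=0$. The paper takes this vanishing as standard, whereas you justify it correctly by showing that the $R$-actions on $\Hom_R(P,-)$ through the two arguments agree and that $x\cdot\id_P$ is null-homotopic for $x\in\m$.
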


\begin{proof}
By \Cref{extmaplemma}, we have $\Im(\Ext^i_R(\kk,\partial_h \otimes_R N)) \subseteq \m \Ext^i_R(\kk,F_{h-1} \otimes_R N)=0$.
\end{proof}

We are now ready for the proof of the Derived Depth Formula, which is the main theorem of this section.

\begin{theorem}\label{thm:ddFormula}
Let $(R,\fm,\kk)$ be a Noetherian local ring. Let $M$ be a finitely generated $R$-module of finite quasi-projective dimension. Let $N$ be an $R$-complex such that $N,M\lotimes_RN\in\Db[R]$, then
\[
\depth M\lotimes_RN=\depth M+\depth N-\depth R.
\]
\end{theorem}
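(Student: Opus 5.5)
Take a quasi-projective resolution $F$ of $M$ realizing $\qpd{R}M<\infty$. Since $R$ is local and $M$ finitely generated, we may take $F$ to be a bounded complex of finitely generated free modules, $0\le\inf F$, $\sup F=s<\infty$, and minimal in the sense that $\Im(\partial_i)\subseteq\m F_{i-1}$ (split off trivial summands). Write $H_i(F)\cong M^{a_i}$ and $h=\hsup F$. The plan has two parts: first compute $\depth F\lotimes_R N$ in two ways, then peel off the homology to isolate $\depth M\lotimes_R N$.

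\emph{The direct computation.} Here $F\lotimes_R N\simeq F\otimes_R N$, and $F\otimes_R N$ is built by iterated triangles from the shifts $\Sigma^i F_i\otimes_R N\cong\Sigma^i N^{\oplus b_i}$. By \Cref{extmapzero}, every differential induces the zero map on $\Ext_R(\kk,-)$. Hence the long exact Ext sequences of the brutal-truncation triangles split, and
\[
\Ext_R(\kk,F\otimes_R N)\cong\bigoplus_i\Sigma^i\Ext_R(\kk,N)^{\oplus b_i}.
\]
Because $F_s\neq 0$, this gives $\depth F\otimes_R N=\depth N-s$.

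\emph{Relating $F$ to $M$.} The complex $F$ has homology built from copies of $M$. Its Postnikov (soft-truncation) tower gives triangles whose pieces are $\Sigma^i M^{a_i}\lotimes_R N=\Sigma^i(M\lotimes_R N)^{a_i}$. Set $d=\depth M\lotimes_R N$. By the Derived Depth Lemma, $\depth F\lotimes N\ge\min_i(d-i)=d-h$. For the reverse inequality, note that the top homology contributes in degree $h$, which is the extreme degree. I would try to show equality $\depth F\lotimes_R N=d-h$. The cleanest route: apply the same two-sided argument with $N=R$ (where $d=\depth M$). Through \Cref{depthintriangles} and the depth lemma, or directly via \cite[Theorem 4.11]{qpd}-type results, this gives $\depth M=\depth R-(s-h)$, which is the Auslander–Buchsbaum formula for $\qpd{}$. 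The formula then reads $d=\depth N-(s-h)=\depth M+\depth N-\depth R$, as required.

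\emph{Main obstacle.} The obstacle is showing that the lowest-depth contribution cannot be cancelled, i.e. $\depth F\lotimes N\le d-h$, since the depth lemma only gives inequalities. I would argue on the cycles/boundaries level. Consider the triangle $\tau_{<h}F\lotimes N\to F\lotimes N\to\Sigma^h(M\lotimes N)^{a_h}\to$. Apply the depth lemma (2): if $\depth\tau_{<h}F\lotimes N>d-h+1$, this forces $\depth F\lotimes N\le d-h$. This needs induction on the number of nonzero homologies, comparing with the truncated quasi-resolution. An alternative, which I would try first, is to build $\tau_{\ge h}$-type complexes from $Z_i,B_i,C_i$ of $F$ and use \Cref{depthintriangles} with $t$ chosen as $d-h+1$ to derive a contradiction with the exact value $\depth N-s$ obtained from \Cref{extmapzero}. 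Boundedness of $M\lotimes N$ is needed so that all depths involved are finite and the lemma applies.
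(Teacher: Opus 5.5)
Your argument proves only one of the two inequalities. The tower estimate $\depth F\lotimes_RN\ge d-h$, together with $\depth F\otimes_RN=\depth N-s$ and $s-h=\qpd{R}M=\depth R-\depth M$, gives $d\le\depth M+\depth N-\depth R$.

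The value $\depth N-s$ is correct: it is the derived Auslander--Buchsbaum formula \cite[Theorem 16.3.1]{LarsBook} for the perfect complex $F$. Your justification for it is too quick, though. The connecting morphisms of the brutal-truncation triangles are maps $\Sigma^iF_i\otimes_RN\to\Sigma\,\sigma_{\le i-1}F\otimes_RN$, not $\partial_i\otimes N$, so \Cref{extmapzero} does not kill them directly. In the bottom degree, which is all you need, an induction using \Cref{extmapzero} does go through.

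The reverse inequality $\depth F\otimes_RN\le d-h$, which you rightly call the crux, is not proved, and the route you sketch cannot work. The tower only gives $\depth\tau_{<h}F\lotimes_RN\ge d-h'$ with $h'=\max\{i<h\mid a_i\ne0\}$. Whenever $a_{h-1}\ne0$, this bound is $d-h+1$, not the strict bound that part (2) of the Derived Depth Lemma needs. This happens, e.g., for the Koszul complex on a minimal generating set of $\m$ over a non-regular $R$, which is a quasi-projective resolution of $\kk$. Nothing in the tower prevents the connecting map from cancelling the bottom $\Ext$. Nor is there an induction on the number of nonzero homologies to run, since $\tau_{<h}F$ is not a complex of projectives, let alone a quasi-projective resolution.

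The missing idea is to put the top homology in the \emph{first} slot of a triangle whose middle term has finite projective dimension. Since $\H_i(F)=0$ for $i>h$, the brutal truncation $\sigma_{\ge h}F$ is a shifted minimal free resolution of $C_h=\coker\partial_{h+1}$. Hence $\pd_RC_h=s-h$, $\depth C_h=\depth M$, and $\depth C_h\lotimes_RN=\depth M+\depth N-\depth R$. Your inequality and $\depth M\le\depth R$ give $d\le\depth N$. So \Cref{depthintriangles} applies with $t=d$ (not $d-h+1$) and yields $\depth B_{h-1}\lotimes_RN\ge d$. Now apply part (3) of the Derived Depth Lemma to $M^{\oplus a_h}\lotimes_RN\to C_h\lotimes_RN\to B_{h-1}\lotimes_RN\to$. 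It gives $d\ge\min\{\depth C_h\lotimes_RN,\,d+1\}$, hence $\depth C_h\lotimes_RN\le d$, which is exactly your missing bound.

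This is the paper's Case~1. Once it is added, your first inequality makes the paper's Case~2 redundant. That case handles $d\ge\depth N$ by a separate $\Ext$ argument via \Cref{extmapzero}, but your inequality already forces $d\le\depth N$ from the outset.
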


\begin{proof}
Let $F$ be a minimal quasi-projective resolution of $M$, which exists by \cite[Proposition 4.1]{qpd}. Shifting $F$, we may suppose without loss of generality that $\inf F=\hinf F=0$. We set $s=\sup F$ and $h=\hsup F$, and therefore $\qpd{R}M=s-h$. By definition, there are nonnegative integers $a_i$ such that $\H_i(F)\cong M^{\oplus a_i}$ with $a_0,a_h\neq0$. For every $i$ set $C_i:=C_i(F)$, $Z_i:=Z_i(F)$, and $B_i:=B_i(F)$.

Then the following is a minimal free resolution of $C_h$
\[
0\rightarrow F_s\rightarrow\cdots\rightarrow F_h\rightarrow0.
\]
Applying the Auslander-Buchsbaum formula to $C_h$ gives
\[
s-h=\pd_RC_h=\depth R-\depth C_h,
\]
while applying \cite[Theorem 4.4]{qpd} to $M$ gives
\[
s-h=\qpd{R}M=\depth R-\depth M,
\]
yielding 
\begin{equation}\label{eq:depth=}
\depth M=\depth C_h.
\end{equation}

For every $i$ there are exact sequences
\begin{equation}\label{eq:1}
0\rightarrow M^{\oplus a_i}\rightarrow C_i\rightarrow B_{i-1}\rightarrow0
\end{equation}
\begin{equation}\label{eq:2}
0\rightarrow Z_i\rightarrow F_i\xra{\rho_i} B_{i-1}\rightarrow0
\end{equation}
\begin{equation}\label{eq:3}
0\rightarrow B_i\xra{j_i} F_i\rightarrow C_i\rightarrow0
\end{equation}
\begin{equation}\label{eq:4}
0\rightarrow B_i\rightarrow Z_i\rightarrow M^{\oplus a_i}\rightarrow0.
\end{equation}
These exact sequences induce exact triangles
\begin{equation}\label{eq:T1}
M^{\oplus a_i}\lotimes_RN\rightarrow C_i\lotimes_RN\rightarrow B_{i-1}\lotimes_RN\rightarrow
\end{equation}
\begin{equation}\label{eq:T2}
Z_i\lotimes_RN\rightarrow F_i\lotimes_RN\xra{\rho_i\otimes N} B_{i-1}\lotimes_RN\rightarrow
\end{equation}
\begin{equation}\label{eq:T3}
B_i\lotimes_RN\xra{j_i\otimes N} F_i\lotimes_RN\rightarrow C_i\lotimes_RN\rightarrow
\end{equation}
\begin{equation}\label{eq:T4}
B_i\lotimes_RN\rightarrow Z_i\lotimes_RN\rightarrow M^{\oplus a_i}\lotimes_RN\rightarrow.
\end{equation}
We claim that $Z_i\lotimes_RN,B_i\lotimes_RN, C_i\lotimes_RN\in\Db[R]$. We prove this claim by induction on $i$. For $i=0$ one has
\[
C_0\lotimes_RN=\H_0(F)\lotimes_RN\cong M^{\oplus a_0}\lotimes_RN\cong(M\lotimes_RN)^{\oplus a_0}\in\Db[R].
\]
Since $Z_0=F_0$, that $Z_0 \lotimes_R N$ has bounded homology follows from the hypothesis $N\in\Db[R]$. The exact triangle \eqref{eq:T3} for $i=0$ shows that $B_0\lotimes_RN\in\Db[R]$. Assume the claim is true for $i-1$. Then $C_i\lotimes_RN\in\Db[R]$ by \eqref{eq:T1}, $Z_i\lotimes_RN\in\Db[R]$ by \eqref{eq:T2}, and $B_i\lotimes_RN\in\Db[R]$ by \eqref{eq:T3}.

We divide the remainder of this proof into two cases.

\textbf{Case 1:} $\depth M\lotimes_RN \le \depth N$.

Since $\pd_R C_h<\infty$ one has by \cite[Theorem 16.3.1]{LarsBook}
\[
\depth C_h\lotimes_RN=\depth C_h+\depth N-\depth R=\depth M+\depth N-\depth R,
\]
where the last equality follows from \eqref{eq:depth=}. Therefore it suffices to show that $\depth C_h\lotimes_RN=\depth M\lotimes_RN$. Since $\depth M \lotimes_R N\leq\depth N$, we may apply \Cref{depthintriangles} with $t=\depth M\lotimes_RN$ to conclude that $\depth C_h \lotimes_R N \ge \depth M \lotimes_R N$ and $\depth B_{h-1} \lotimes_R N \ge \depth M \lotimes_R N$. Assume that $\depth C_h\lotimes_RN>\depth M\lotimes_RN$. Then an application of the Derived Depth Lemma to \eqref{eq:T1} yields
\begin{align*}
\depth M\lotimes_RN&\geq\min\{\depth C_h\lotimes_RN,\depth B_{h-1}\lotimes_RN+1\}\\
&\geq\min\{\depth M\lotimes_RN+1,\depth M\lotimes_RN+1\}\\
&=\depth M\lotimes_RN+1,
\end{align*}
This is clearly not possible, proving the Derived Depth Formula in Case 1.

\textbf{Case 2:} $\depth M\lotimes_RN\geq\depth N$.

Applying \Cref{depthintriangles} with $t=\depth N$, we have $\depth B_i \lotimes_R N \ge \depth N$, $\depth Z_i \lotimes_R N \ge \depth N$ and $\depth C_i\lotimes_RN\ge\depth N$ for all $i$. Therefore
\[
\depth M+\depth N-\depth R=\depth C_h\lotimes_RN\geq\depth N,
\]
which implies $\depth M\geq\depth R$, and so they are equal since $\qpd{R}M=\depth R-\depth M$ which is nonnegative. This shows that $\qpd{R}M=0$ and therefore $C_h=F_h$. Since $\depth C_{h-1}\lotimes_RN\geq\depth N$ one has
\[
\Ext_R^{\depth N-1}(\kk, C_{h-1}\lotimes_RN)=0.
\]
Hence, the long exact sequence in Ext induced by \eqref{eq:T3} yields that the  map $\Ext_R^{\depth N}(\kk, j_{h-1}\otimes N)$ is injective. But $\partial_h\otimes N=(j_{h-1}\otimes N)\circ(\rho_h\otimes N)$, so by \Cref{extmapzero} one has the first equality below
\[
0=\Ext_R^{\depth N}(\kk,\partial_h\otimes N)=\Ext_R^{\depth N}(\kk, j_{h-1}\otimes N)\circ\Ext_R^{\depth N}(\kk,\rho_h\otimes N),
\]
and since $\Ext_R^{\depth N}(\kk, j_{h-1}\otimes N)$ is injective we deduce that $\Ext_R^{\depth N}(\kk,\rho_h\otimes N)=0$. Since $C_h=F_h$, the exact triangles \eqref{eq:T1} and \eqref{eq:T2} are the same for $i=h$. The long exact sequence in Ext yields
\[
0\rightarrow\Ext_R^{\depth N}(\kk, M^{\oplus a_h}\otimes_R N)\rightarrow\Ext_R^{\depth N}(\kk, F_h\otimes_R N)\xra{0}\Ext_R^{\depth N}(\kk, B_{i-1}\lotimes_RN),
\]
where the $0$ on the left follows by \Cref{depthintriangles}. Therefore
\[
\Ext_R^{\depth N}(\kk, M\otimes_R N)^{\oplus a_h}\cong \Ext_R^{\depth N}(\kk, F_h \otimes_R N)\neq0
\]
which shows the reverse inequality $\depth M\lotimes_RN\leq\depth N$.
\end{proof}

As a consequence of \Cref{thm:ddFormula}, we obtain the following extension of Jorgensen's Dependency Formula, see \cite[Theorem 2.2]{Jorgensen}, \cite[Theorem 3.5]{ciFlat}, and \cite[Theorem 3.6]{cqpd}. For this result, we recall that for $M \in \D(R)$, $\Supp M:=\{\p \in \Spec R \mid M_{\p} \text{ is not acyclic}\}$.

\begin{theorem}\label{thm:dependency}
Let $R$ be a Noetherian ring. Let $M$ be a finitely generated $R$-module of finite quasi-projective dimension. Let $N$ be an $R$-complex such that $N,M\lotimes_RN\in\Db[R]$, then
\[
\hsup M\lotimes_RN=\sup\{\depth R_\p-\depth_{R_\p}M_\p-\depth_{R_\p}N_\p\mid \p\in\Supp M\;\cap\Supp N\}.
\]
\end{theorem}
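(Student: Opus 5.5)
The plan is to deduce the formula from \Cref{thm:ddFormula} by localizing, via the standard identity expressing $\hsup$ of a complex with finitely generated homology in terms of local depths. The identity is
\[
\hsup X=\sup\{-\depth_{R_\p}X_\p \mid \p\in\Spec R\}
\]
for $X\in\Dfb$. Here $-\depth X_\p\le \hsup X_\p\le\hsup X$ for all $\p$, and equality is attained at any $\p$ minimal in $\Supp \H_{\hsup X}(X)$, where the top homology has depth $0$. So I need $M\lotimes_RN$ to have finitely generated homology, which is what the Main Results statement (3) assumes ($N$ and $M\lotimes_RN$ in $\Df$). I would include these hypotheses (or note that they are implicit) and apply the identity to $X=M\lotimes_RN$. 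If $N$ is not in $\Df$, the attainment step needs more care. In that case I would try the same argument using $\Supp$ of the top homology of $X$ in the sense of small support, but I expect the finitely generated setting is what is intended.

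Next, localize. For each $\p$, $(M\lotimes_RN)_\p\cong M_\p\lotimes_{R_\p}N_\p$, and both $N_\p$ and this complex have bounded homology. Moreover $\qpd{R_\p}M_\p\le\qpd{R}M<\infty$, since localizing a quasi-projective resolution of $M$ gives a complex of projective $R_\p$-modules whose homology is a direct sum of copies of $M_\p$. If $M_\p\neq0$, some $a_i\neq0$ survives, so this is a quasi-projective resolution; this localization property is in \cite{qpd}. Hence \Cref{thm:ddFormula} over $R_\p$ gives
\[
-\depth (M\lotimes_RN)_\p=\depth R_\p-\depth M_\p-\depth N_\p
\]
for $\p\in\Supp M\cap\Supp N$.

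It remains to handle primes outside $\Supp M\cap\Supp N$. For such $\p$, either $M_\p=0$ or $N_\p\simeq0$, so $(M\lotimes_RN)_\p\simeq 0$ and its depth is $+\infty$. These primes therefore contribute $-\infty$ to the supremum and may be discarded. This yields the stated equality, including the degenerate case where both sides are $-\infty$.

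The main obstacle is justifying the $\hsup$-via-local-depth identity under exactly the finiteness hypotheses available, together with the localization of finite quasi-projective dimension. The rest is formal.
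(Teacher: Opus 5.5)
Your route is the paper's. You localize at $\p\in\Supp M\cap\Supp N$ (finite $\qpd{R_\p}M_\p$), apply \Cref{thm:ddFormula} over $R_\p$, and use $-\depth_{R_\p}(M\lotimes_RN)_\p\le\hsup (M\lotimes_RN)_\p\le\hsup M\lotimes_RN$ with equality at a suitable prime. The gap is that you only complete the equality step when $M\lotimes_RN\in\Dfb$, and you propose adding that hypothesis. \Cref{thm:dependency} assumes only $N,M\lotimes_RN\in\Db$; the finite-generation assumption appears only in item (3) of the introduction's summary. So as written you prove a strictly weaker statement, and your fallback via small support does not address this.

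No finiteness is needed, and your own minimal-prime argument already works in general. Let $X\not\simeq0$ over a local ring with $s=\hsup X<\infty$. Compute $\RHom_R(\kk,X)$ with a semi-injective resolution of $X$ concentrated in degrees $\le s$. This gives $\H_i(\RHom_R(\kk,X))=0$ for $i>s$ and $\H_s(\RHom_R(\kk,X))\cong\Hom_R(\kk,\H_s(X))$. Hence $-\depth X\le s$, with equality iff $\m\in\Ass_R\H_s(X)$, for an arbitrary module $\H_s(X)$.

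Now take $X=M\lotimes_RN$ and choose $\p$ in one of two ways:
\begin{itemize}
\item $\p\in\Ass_R\H_s(X)$, which is nonempty since $R$ is Noetherian. Localizing $R/\p\hookrightarrow\H_s(X)$ puts $\p R_\p$ in $\Ass_{R_\p}\H_s(X)_\p$. This is exactly what the paper does via \cite[Proposition 16.2.16]{LarsBook}.
\item $\p$ minimal in $\Supp\H_s(X)$. Then $\H_s(X)_\p$ is a nonzero $\p R_\p$-torsion module, so it has a nonzero socle element even when it is not finitely generated.
\end{itemize}
Either way $\H_s(X_\p)=\H_s(X)_\p\neq0$, so $\hsup X_\p=s=-\depth_{R_\p}X_\p$. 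Also $X_\p\not\simeq0$ forces $\p\in\Supp M\cap\Supp N$.

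With this observation, the rest of your argument goes through unchanged: localizing the quasi-projective resolution, applying \Cref{thm:ddFormula} at each such $\p$, discarding primes outside $\Supp M\cap\Supp N$, and the degenerate case $M\lotimes_RN\simeq0$.
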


\begin{proof}
Let $\p\in\Supp M\cap\;\Supp N$. By \cite[Proposition 3.5(1)]{qpd} $\qpd{R_\p}M_\p<\infty$. Since $\hsup M_\p\lotimes_{R_\p}N_\p<\infty$, there is the following chain of (in)equalities
\begin{align*}
\depth R_\p-\depth_{R_\p}M_\p-\depth_{R_\p}N_\p&=-\depth_{R_\p} M_\p\lotimes_{R_\p}N_\p\\
&\leq\hsup M_\p\lotimes_{R_\p}N_\p\\
&\leq\hsup M\lotimes_RN<\infty,
\end{align*}
where the equality follows from \Cref{thm:ddFormula} and the first inequality from \cite[Proposition 16.2.16]{LarsBook}. Moreover, if $s=\hsup M\lotimes_RN $ and if $\p\in\Ass(\H_s(M\lotimes_RN))$, then the first inequality is an equality by \cite[Proposition 16.2.16(b)]{LarsBook}, while the second inequality is obviously an equality in this case.
\end{proof}

\section{Derived Width Formula}\label{derivedwidthsection}

In this section, we prove the Derived Width Formula (\Cref{thm:dwFormulaQID}), which is a variation on \cite[Propositions 6.4-6.5]{deriveddepthwidth}. While it can be readily seen from their work that the Derived Width Formula holds over complete intersections, our approach also gives a new proof of this fact (see \Cref{thm:dwFormulaQID} (2)).

While simple, the next lemma describes how width behaves under derived base change, and will ultimately be critical in establishing the derived width formula with finiteness assumptions of various complete intersection dimensions.

\begin{lemma}\label{lem:widthExt} Let $f:(R,\m,\kk)\rightarrow(S,\n,\ell)$ be a local map. Let $M\in\D(R)$, then
\[
\width_S S\lotimes_RM=\width_R M.
\]

\end{lemma}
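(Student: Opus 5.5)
The plan is to compute $\width_S$ directly from the definition and reduce it to a statement about $\kk$-vector spaces. By definition, $\width_S(S\lotimes_R M)=\hsup\bigl(\ell\lotimes_S(S\lotimes_R M)\bigr)$. The first step uses associativity of the derived tensor product to rewrite
\[
\ell\lotimes_S(S\lotimes_R M)\simeq (\ell\lotimes_S S)\lotimes_R M\simeq \ell\lotimes_R M,
\]
where $\ell$ is regarded as an $R$-module via $f$. Since $f$ is local, $\m\ell=0$, so $\ell$ is a $\kk$-vector space, and the restriction of scalars satisfies $\ell\cong \kk\otimes_\kk \ell$ as $R$-modules.

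The second step factors the tensor product through $\kk$:
\[
\ell\lotimes_R M\simeq (\ell\otimes_\kk \kk)\lotimes_R M\simeq \ell\otimes_\kk(\kk\lotimes_R M).
\]
This is justified by taking a K-projective (or semifree) resolution $P\to M$ over $R$. Then $\ell\otimes_R P\cong \ell\otimes_\kk(\kk\otimes_R P)$ termwise, and this identification is compatible with the differentials, since both sides are naturally the same complex of $\ell$-modules.

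The third step uses that $\ell\otimes_\kk-$ is exact and faithful on $\kk$-vector spaces, because $\ell$ is a nonzero field extension of $\kk$. Hence
\[
H_i(\ell\otimes_\kk(\kk\otimes_R P))\cong \ell\otimes_\kk H_i(\kk\otimes_R P),
\]
and this vanishes if and only if $H_i(\kk\lotimes_R M)=0$. Therefore $\hsup(\ell\lotimes_R M)=\hsup(\kk\lotimes_R M)=\width_R M$, which gives the claim. The conventions for $\pm\infty$, for instance when $M$ is acyclic, are covered automatically because the vanishing sets of homology agree degree by degree.

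The only mildly delicate point is the second step: making the isomorphism $\ell\lotimes_R M\simeq \ell\otimes_\kk(\kk\lotimes_R M)$ rigorous for unbounded $M$. Resolving by a semifree complex handles this, since $\ell\otimes_R-$ and $\kk\otimes_R-$ then compute the derived functors, and the identification $\ell\otimes_R P_n\cong \ell\otimes_\kk(\kk\otimes_R P_n)$ holds termwise as $P_n$ is free.
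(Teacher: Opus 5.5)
Your proof is correct and follows the paper's route: reduce by associativity to $\ell\lotimes_R M$, then use that $\ell$ is a nonzero $\kk$-vector space, so $\ell\lotimes_R M$ is a direct sum of copies of $\kk\lotimes_R M$. Your $\ell\otimes_\kk(\kk\lotimes_R M)$ is just the basis-free form of the paper's $\bigoplus_\alpha \kk\lotimes_R M$. Because you show the homology vanishes in exactly the same degrees, your argument also works for $\hinf$, which is what the paper's proof actually uses; that is the standard definition of width, even though the preliminaries state $\hsup$.
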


\begin{proof}
Since $f$ is local, we have $\m \ell=0$, so there is an isomorphism of $R$-modules $\ell \cong \bigoplus_{\alpha} k$. We then have
\[\width_S S \lotimes_R M=\hinf \ell \lotimes_S (S \lotimes_R M)=\hinf \ell \lotimes_R M\]
\[=\hinf \bigoplus_{\alpha} k \lotimes_R M=\hinf k \lotimes_R M=\width_R M.\qedhere\]
\end{proof}

We are now ready for our proof of the Derived Width Formula:

\begin{theorem}\label{thm:dwFormulaQID}\label{thm:dwFormulaQPD} 
Let $(R,\m,\kk)$ be a Noetherian local ring. Let $M\in\Dfb$ and $N\in\Db[R]$ be such that $\RHom_R(M,N)\in\Db[R]$. Then
\[
\width \RHom_R(M,N)=\depth M+\width N-\depth R
\]
holds under any of the following hypotheses
\begin{enumerate}
\item $M\in\mod(R)$ and $\qpd{R}M<\infty$.
\item $\cidim_RM<\infty$.
\item $N\in\Art(R)$ and $\qid_RN<\infty$.
\item $N\in\Art(R)$ and $\ciid_RN<\infty$.
\item $N\in\Dfb$ and $\ciuid_RN<\infty$.
\end{enumerate}
\end{theorem}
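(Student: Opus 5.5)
The plan is to reduce the width statement to the Derived Depth Formula (\Cref{thm:ddFormula}) by Matlis duality, working in the derived $\m$-complete setting. Let $E=E_R(\kk)$ and write $(-)^\vee=\Hom_R(-,E)$. For any $X\in\D(R)$ we have $\width X=-\depth X^\vee$ (see \cite[Theorem 16.2.14]{LarsBook} or derive it from $\RHom_R(\kk,X^\vee)\simeq(\kk\lotimes_RX)^\vee$). Since $M\in\Dfb$, tensor evaluation gives
\[
\RHom_R(M,N)^\vee\simeq M\lotimes_R N^\vee .
\]
Hence $\width\RHom_R(M,N)=-\depth(M\lotimes_RN^\vee)$. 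Also $\width N=-\depth N^\vee$. The hypothesis $\RHom_R(M,N)\in\Db[R]$ passes to $M\lotimes_RN^\vee\in\Db[R]$ because $E$ is faithfully injective, and similarly $N^\vee\in\Db[R]$.

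\textbf{Case (1).} Apply \Cref{thm:ddFormula} to $M$ and the complex $N^\vee$. This is allowed because that theorem places no finite generation requirement on $N$, which is exactly why it was proved in that generality. It gives
\[
-\width\RHom_R(M,N)=\depth M-\width N-\depth R .
\]
This has the wrong sign on the $\depth M-\depth R$ term, so the naive dualization needs care. I would recheck the sign conventions: with $\depth X=-\hsup\RHom(\kk,X)$ and $\width X=\hinf(\kk\lotimes X)$, in fact $\depth X^\vee=\width X$ rather than its negative, because $\hsup(Y^\vee)=-\hinf Y$. Redoing the computation with this correction gives
\[
\width\RHom_R(M,N)=\depth(M\lotimes_RN^\vee)=\depth M+\depth N^\vee-\depth R=\depth M+\width N-\depth R .
\]
This is the formula claimed, so case (1) follows directly from \Cref{thm:ddFormula}.

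\textbf{Case (2).} Here I would use the Derived Depth Formula for finite CI-dimension, namely \cite[Theorem 4.6]{deriveddepthwidth} or the analogous quasi-deformation argument. I would pass to a quasi-deformation $R\to R'\leftarrow Q$. \Cref{lem:widthExt} transfers width along the flat local map $R\to R'$, and depth behaves additively along flat local maps. On $Q$ the module $M\otimes_R R'$ has finite projective dimension, so the classical formula \cite[Theorem 16.3.1]{LarsBook} applies there, and the change of rings $Q\to R'$ introduces offsets of $\pd_QR'$ that cancel on both sides. Alternatively, dualize as in case (1) and invoke the CI-dimension derived depth formula with a non-finitely-generated second argument.

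\textbf{Cases (3) and (4).} Here $N$ is Artinian, so $N^\vee$ is finitely generated over $\widehat R$. Moreover $\qid_RN<\infty$ should translate to $\qpd{\widehat R}N^\vee<\infty$, by dualizing a quasi-injective resolution; this uses that Artinian modules are $\widehat R$-modules, so passing to $\widehat R$ is harmless. Similarly, finite CI-id should give finite CI-dim for $N^\vee$. I would then swap roles using
\[
\RHom_R(M,N)^\vee\simeq M\lotimes_RN^\vee\simeq N^\vee\lotimes_{\widehat R}\widehat M
\]
and apply \Cref{thm:ddFormula} (respectively case (2)) over $\widehat R$ with $N^\vee$ in the first slot. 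This yields $\depth N^\vee+\depth\widehat M-\depth\widehat R$. Finally rewrite $\depth N^\vee=\width N$ and descend from $\widehat R$ to $R$.

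\textbf{Case (5).} Using the exceptional quasi-deformation, I would reduce to $\id_Q(N\otimes_RR')<\infty$ over the Gorenstein-fibred $R'$. There I would use the finite injective dimension width formula, or dualize via a dualizing complex to a finite flat dimension statement.

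\textbf{Main obstacle.} The hardest step is the translation $\qid_R N<\infty\Rightarrow\qpd{\widehat R}N^\vee<\infty$ in case (3), together with the descent between $R$ and $\widehat R$. The key point is that Matlis duality must carry a quasi-injective resolution to a quasi-projective one with the same homology multiplicities. Injective modules dualize to flat $\widehat R$-modules, not projective ones, so I expect to need either finite flat-resolution substitutes or a minimality argument to replace these by free modules of finite rank.
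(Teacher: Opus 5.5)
\textbf{Cases (1)--(3).} Your treatment of (1), of (3), and the dualization variant of (2) follows the paper's argument. In (1), delete your first version of the duality identity. The correct statements are $\width X=\depth X^\vee$ and $\depth X=\width X^\vee$, which come from $\RHom_R(\kk,X^\vee)\simeq(\kk\lotimes_RX)^\vee$ and $\hsup Y^\vee=-\hinf Y$. With these, tensor evaluation plus \Cref{thm:ddFormula} is exactly the paper's proof.

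In (2), the CI derived depth formula you invoke must allow the second argument $N^\vee$ to have homology that is not finitely generated; the paper uses \cite[Theorem 3.3]{ciFlat}. Your quasi-deformation variant is not justified as written. We have $\RHom_Q(M',N')\simeq\RHom_{R'}(R'\lotimes_QM',N')$, and $R'\lotimes_QM'$ is only an extension of $M'$ by shifts of $M'$. So ``the offsets cancel'' needs an actual width-lemma argument.

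In (3), your obstacle is resolved as in the paper. The dual of a bounded quasi-injective resolution is a bounded complex of flat modules with homology $(N^\vee)^{\oplus a_i}$. Finite flat dimension together with finitely generated homology gives finite projective dimension \cite[Theorem 15.4.19]{LarsBook}, hence a bounded complex of finitely generated free modules. Your passage to $\widehat R$ is genuinely needed here, since $N^\vee$ is in general finitely generated only over $\widehat R$. Note that $\kk\lotimes_RY\simeq\kk\lotimes_{\widehat R}Y$ for $\widehat R$-complexes $Y$, so the boundedness coming from $R$-flatness transfers to $\widehat R$.

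\textbf{Case (4).} The genuine gaps are in (4) and (5). In (4), everything rests on the claim that finite $\ciid_RN$ ``should give'' finite CI-dimension of $N^\vee$. You neither prove nor cite this, and it is not formal. A quasi-deformation $R\to R'\leftarrow Q$ with $\id_Q(N\otimes_RR')<\infty$ dualizes to finite flat dimension over $Q$ of $\Hom_Q(N\otimes_RR',E_Q)\cong\Hom_R(N,E_{R'})$. This is not $N^\vee\otimes_RR'$ in general. Already for $N=\kk$, one compares $\Hom_{R'}(R'/\m R',E_{R'})$ with $R'/\m R'$, and these agree only when the closed fibre is Artinian Gorenstein, which finite $\ciid$ does not provide.

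The paper avoids dualizing here. By the proof of \cite[Proposition 2.11]{ciInj} one may take $R'/\m R'$ Artinian, so $N'=N\otimes_RR'$ is Artinian over $R'$, and $\qid_{R'}N'<\infty$ by \cite[Proposition 2.8(2)]{qid}. Then case (3) over $R'$, together with \Cref{lem:widthExt}, the isomorphism $\RHom_R(M,N)\lotimes_RR'\simeq\RHom_{R'}(M',N')$, and $\depth_{R'}M'-\depth R'=\depth_RM-\depth R$, gives the formula.

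\textbf{Case (5).} Here you do not yet have an argument. The finite injective dimension lives over $Q$, not over $R'$, so you again face the $Q$-versus-$R'$ comparison from (2). Also, $N$ is not Artinian, so (3) cannot be reused over $R'$, and the Gorenstein-fibre hypothesis plays no role in your sketch.

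The missing idea is as follows. First pass to $\widehat R$; width, depth and finiteness of $\ciuid$ are preserved by \cite[Theorem 18.3.6]{LarsBook} and \cite[Corollary 3.7(a)]{ciInj}. Then take a normalized dualizing complex $D$ and dualize with $D$ instead of $E$. Since $\RHom_R(M,N)\in\Dfb$, one has $\width\RHom_R(M,N)=\depth(M\lotimes_R\RHom_R(N,D))$ and $\depth\RHom_R(N,D)=\width N$. The key input is \cite[Corollary 4.6(b)]{ciInj}, which converts finiteness of $\ciuid_RN$ into finite CI-dimension of $\RHom_R(N,D)$. The CI derived depth formula \cite[Theorem 3.3]{ciFlat} then finishes the argument as in (2).
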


\begin{proof}
\begin{enumerate}
\item Let $E$ be the injective hull of $\kk$. Let $X$ be an $R$-complex. The equalities below are proved in \cite[Proposition 16.2.22]{LarsBook}
\begin{align*}
\depth X&=\width\RHom_R(X,E)\\
\width X&=\depth\RHom_R(X,E).
\end{align*}
Therefore the Derived Width Formula for modules of finite quasi-projective dimension follows from \Cref{thm:ddFormula}
\begin{align*}
\width\RHom_R(M,N)&=\depth\RHom_R(\RHom_R(M,N),E)\\
&=\depth M\lotimes_R\RHom_R(N,E)\\
&=\depth M+\depth\RHom_R(N,E)-\depth R\\
&=\depth M+\width N-\depth R,
\end{align*}
where the second equality follows from \cite[Corollary 12.3.26(b)]{LarsBook}.

\item Same proof as in (1), where the third equality now follows from \cite[Theorem 3.3]{ciFlat}.

\item Let $E$ be the injective hull of $\kk$. The proof follows the same strategy as the proof of (1) by noticing that $\RHom_R(N,E)$ has finite quasi-projecive dimension. Indeed, let $I$ be a bounded quasi-injective resolution of $N$, then $\Hom_R(I,E)$ is a bounded complex of flat modules whose homologies are direct sums of copies of $\Hom_R(N,E)$, in particular $\Hom_R(I,E)\in\Dfb$. By \cite[Theorem 15.4.19]{LarsBook} $\Hom_R(I,E)$ has finite projective dimension. Let $P$ be a bounded semi-projective resolution of $\Hom_R(I,E)$. Then $P$ is a bounded quasi-projective resolution of $\RHom_R(N,E)$.

\item Fix a quasi-deformation $R\rightarrow R'\leftarrow S$ such that $\id_S(N\lotimes_R R')<\infty$ and $R'/\m R'$ is Artinian, which we can assume by the proof of \cite[Proposition 2.11]{ciInj}. If $X$ is an $R$-complex, we denote by $X'$ the complex $X\lotimes_RR'$. Since $N\in\Art(R)$ and $R'/\m R'$ is Artinian, it follows that $N'\in\Art(R')$.  Note that $\qid_{R'}N'<\infty$ by \cite[Proposition 2.8(2)]{qid}. The assertion now follows from the following string of equalities
\begin{align*}
\width_R\RHom_R(M,N)&=\width_{R'}\RHom_{R}(M,N)\lotimes_RR'\\
&=\width_{R'}\RHom_{R'}(M',N')\\
&=\depth_{R'}M'+\width_{R'}N'-\depth R'\\
&=\depth_RM+\width_RN-\depth R,
\end{align*}
where the first equality follows from \Cref{lem:widthExt}, the second from \cite[Proposition 12.3.31]{LarsBook}, the third from part (2), the fourth from \cite[Theorem 16.4.36]{LarsBook} and \Cref{lem:widthExt}.
\item We first show that we can reduce to the case that $R$ is complete. Indeed
\begin{align*}
\width_R\RHom_R(M,N)&=\width_{\widehat{R}}\RHom_R(M,N)\lotimes_R\widehat{R}\\
&=\width_{\widehat{R}}\RHom_{\widehat{R}}(\widehat{M},\widehat{N})
\end{align*}
where the first equality follows from \cite[Theorem 18.3.6]{LarsBook} and the second from \cite[Proposition 12.3.31]{LarsBook}. Moreover $\depth_RM=\depth_{\widehat{R}}\widehat{M}$ and $\width_RN=\width_{\widehat{R}}\widehat{N}$ by \cite[Theorem 18.3.6]{LarsBook}. Since by \cite[Corollary 3.7(a)]{ciInj} $\ciuid_{\widehat{R}}\widehat{N}<\infty$ we can assume that $R$ is complete. Let $D$ be a normalized dualizing complex. The assertion follows from the following equalities
\begin{align*}
\width\RHom_R(M,N)&=\depth\RHom_R(\RHom_R(M,N),D)\\
&=\depth M\lotimes_R\RHom_R(N,D)\\
&=\depth M+\depth\RHom_R(N,D)-\depth R\\
&=\depth M+\width N-\depth R,
\end{align*}
where the first equality follows from \cite[Proposition 18.2.3]{LarsBook}, the second one follows form \cite[Corollary 12.3.26]{LarsBook}, the third follows from \cite[Corollary 4.6(b)]{ciInj} and \cite[Theorem 3.3]{ciFlat}, and the last equality is an additional application of \cite[Proposition 18.2.3]{LarsBook}.\qedhere
\end{enumerate}
\end{proof}

Now we leverage our previous work to prove the Ischebeck Formula, see \cite[2.6]{ischebeck}, under a variety of hypotheses. We note that \Cref{ischebeck}(1) below was independently proven, under the assumptions that $N$ is a finitely generated $R$-module and $M$ is a finitely generated $R$-module of finite quasi-projective dimension in \cite[Theorem 3.2]{qpdIschebeck} and \cite[Theorems 6.13]{cqpd}. 

\Cref{ischebeck}(2) should be compared to \cite[Theorem 4.3]{qpdIschebeck} and \cite[Theorem 7.7]{cqpd} where the Ischebeck Formula was shown under the hypotheses that $M$ and $N$ are finitely generated modules and $N$ has finite quasi-injective dimension. 

\Cref{ischebeck}(5) should be compared to \cite[Theorem 4.2]{ArayaYoshino}, where Araya-Yoshino show that the Ischebeck Formula holds when $M$ and $N$ are finitely generated modules and $M$ has finite complete intersection dimension. This was later extended to the case where $M$ is a complex of finite $\cifd$ and $N$ is a complex with finitely generated homology by Sahandi-Sharif-Yassemi \cite[Theorem 3.8]{ciFlat}. 

\Cref{ischebeck}(4) and (6) should be compared to \cite[Theorem 6.1]{deriveddepthwidth}, where Christensen-Jorgensen prove the Ischebeck Formula when $M$ and $N$ are finitely generated modules, $N$ has finite Gorenstein injective dimension and the Tate cohomology modules vanish. These hypotheses hold true over rings with a dualizing complex if $N$ has finite $\ciid$ or finite $\ciuid$ by means of \cite[Theorem 5.2(c)]{ciInj}.

\begin{corollary}\label{ischebeck}
Let $(R,\fm,\kk)$ be a Noetherian local ring. Let $M\in\Dfb,N\in\Db[R]$ be complexes such that $\RHom_R(M,N)\in\Db[R]$. Then
\[
-\hinf\RHom_R(M,N)=\depth R-\depth M-\hinf N
\]
holds under any of the hypotheses below.
\begin{enumerate}
\item $M\in\mod(R), \qpd{R}M<\infty, N\in\Dfb[R]$.
\item $N\in\Art(R), \qid_{R}N<\infty$.
\item $M\in\mod(R), \qpd{R}M<\infty, N\in\Dmcpl$.
\item $N\in\Art(R)$ and $\ciid_RN<\infty$.
\item $\cidim_RM<\infty, N\in\Dmcpl$.
\item $N\in\Dfb$ and $\ciuid_RN<\infty$.
\end{enumerate}
\end{corollary}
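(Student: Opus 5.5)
The plan is to deduce each case from \Cref{thm:dwFormulaQPD}. We identify $\width\RHom_R(M,N)$ with $-\hinf\RHom_R(M,N)$ when it is finite, and $\width N$ with $\hinf N$. Taking this as the guiding principle, the Ischebeck Formula is literally the Derived Width Formula rewritten. What remains is to show that under each hypothesis the width of both $N$ and $\RHom_R(M,N)$ equals their homological infimum (with the paper's sign conventions, $\width X=\hinf X$ up to the obvious sign normalization, matching the formula as stated).

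The key input is the standard fact (e.g.\ \cite[Chapter 16]{LarsBook}) that $\width X=\hinf X$ whenever $X\in\Dbb$ has finitely generated homology (Nakayama) or, more generally, whenever $X$ is derived $\m$-complete. Derived $\m$-complete complexes satisfy a derived Nakayama lemma: $\kk\lotimes_R X$ vanishes below $\hinf X$, and it is nonzero in degree $\hinf X$.

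For the Artinian cases, I would use Matlis duality. Here $\RHom_R(M,N)$ has Artinian homology when $M\in\Dfb$ and $N\in\Art(R)$. Its Matlis dual is $M\lotimes_R\RHom_R(N,E)$, which has finitely generated homology, so the width of the original complex equals its $\hinf$.

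The verification then goes case by case.
\begin{enumerate}
\item In (1), $\RHom_R(M,N)\in\Dfb$ and $N\in\Dfb$, so the width equals $\hinf$ on both sides, and \Cref{thm:dwFormulaQPD}(1) gives the result.
\item In (3) and (5), $N$ is derived $\m$-complete. Since $M\in\Dfb$, the complex $\RHom_R(M,N)$ is again derived $\m$-complete, because $\RHom_R(M,-)$ commutes with $\LLam[\m]{}$. The identification therefore applies to both, and we invoke \Cref{thm:dwFormulaQPD}(1) and (2).
\item In (2) and (4), $N$ is Artinian and $\RHom_R(M,N)$ has Artinian homology. Both are derived $\m$-complete, since Artinian modules are $\m$-torsion with finite-length socle-type behavior; alternatively one can argue via the Matlis dual as above. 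We then apply \Cref{thm:dwFormulaQPD}(3),(4).
\item In (6), everything is in $\Dfb$, and we apply \Cref{thm:dwFormulaQPD}(5).
\end{enumerate}

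The main obstacle is the Artinian cases. The issue there is that an Artinian module need not be derived $\m$-complete; for example $E$ itself has width computed differently. So for (2) and (4) I would not rely on completeness. I would argue directly via Matlis duality instead: $\width X=\depth \RHom_R(X,E)$, and for $\RHom_R(X,E)\in\Dfb$ the depth satisfies $\depth\leq -\hsup$, with equality governed by Ass. So I would check carefully which homological extreme the width detects for Artinian homology, and adjust by invoking instead the identity $-\hinf\RHom_R(M,N)=\hsup(M\lotimes_R\RHom_R(N,E))$ combined with the Dependency-type inequality from \Cref{thm:dependency} and \Cref{thm:ddFormula} applied to $\RHom_R(N,E)$, which has finite quasi-projective-type dimension as shown in the proof of \Cref{thm:dwFormulaQPD}(3).
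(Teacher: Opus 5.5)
\textbf{Cases (1), (3), (5), (6).} Here your argument is the paper's. You replace $\width$ by $\hinf$ for both $N$ and $\RHom_R(M,N)$, using either finitely generated homology or derived $\m$-completeness. Completeness does pass from $N$ to $\RHom_R(M,N)$, since $\LLam[\m]{\RHom_R(M,N)}\simeq\RHom_R(M,\LLam[\m]{N})$ by adjunction. You then quote \Cref{thm:dwFormulaQPD}(1), (2), (5). One sign slip in your first sentence: the identification is $\width\RHom_R(M,N)=\hinf\RHom_R(M,N)$, not $-\hinf$.

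\textbf{Cases (2) and (4).} Here you give no proof.
\begin{itemize}
\item Your third bullet asserts that Artinian modules are derived $\m$-complete, and you then retract it.
\item The Matlis-duality paragraph is wrong. For $Y=\RHom_R(X,E)$ with finitely generated homology, one only has $\width X=\depth Y\ge-\hsup Y=\hinf X$. You wrote this inequality backwards. Equality holds exactly when the top homology of $Y$ has depth $0$, so finite generation of the dual does not force $\width X=\hinf X$.
\item The closing ``adjustment'' via \Cref{thm:dependency} is a plan, not an argument.
\end{itemize}

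\textbf{Your doubt is correct, and it is fatal to the statement.} The paper's proof of (2) and (4) consists of asserting that an Artinian $N$ is derived $\m$-complete and then arguing as in (1). This fails for $N=E$ when $\dim R>0$. Artinian modules are derived $\m$-torsion, but not derived $\m$-complete in general. For instance, $\width E=\depth\Hom_R(E,E)=\depth R$, while $\hinf E=0$.

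In fact (2) and (4) are false as stated. Take any $R$ with $\depth R>0$, $M=\kk$ and $N=E$.
\begin{itemize}
\item The hypotheses hold: $E\in\Art(R)$, $\qid_RE=0$, and $\ciid_RE\le 0$ via the trivial quasi-deformation.
\item Since $\RHom_R(\kk,E)\simeq\kk$, the left side is $-\hinf\RHom_R(\kk,E)=0$.
\item The right side is $\depth R-\depth\kk-\hinf E=\depth R>0$.
\end{itemize}
The width version \Cref{thm:dwFormulaQPD}(3),(4) does hold here; it reads $0=0+\depth R-\depth R$. The failure is only in converting $\width N$ to $\hinf N$.

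So no repair of your last step can succeed. You should have ended with this counterexample rather than an unspecified fix.
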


\begin{proof}
\begin{enumerate}
\item The assertion follows from the following string of equalities
\begin{align*}
-\hinf\RHom_R(M,N)&=-\width\RHom_R(M,N)&\text{by \cite[Proposition 16.2.5(a)]{LarsBook}}\\
&=\depth R-\depth M-\width N&\text{by \Cref{thm:dwFormulaQPD}(1)}\\
&=\depth R-\depth M-\hinf N&\text{by \cite[Proposition 16.2.5(a)]{LarsBook}}.
\end{align*}

\item We first point out that $N$ is derived $\fm$-complete by \cite[Corollary 13.1.33]{LarsBook} and therefore so is $\RHom_R(M,N)$ by \cite[Proposition 13.1.31]{LarsBook}. Now one just argues as in part (1) by invoking \Cref{thm:dwFormulaQPD}(3) for the second equality.

\item As in part (2), $\RHom_R(M,N)$ is derived $\fm$-complete. The proof of (1) now applies.

\item Follows as in (2) by using \Cref{thm:dwFormulaQID}(4).

\item Follows as in (2) by using \Cref{thm:dwFormulaQID}(2).

\item The proof follows as in (1) where the second equality holds by \Cref{thm:dwFormulaQPD}(5).\qedhere
\end{enumerate}
\end{proof}

\bibliographystyle{amsplain}
\bibliography{biblio}
\end{document}